\newcommandx{\unsure}[2][1=]{\todo[linecolor=red,backgroundcolor=red!25,bordercolor=red,#1]{#2}}
\newcommandx{\change}[2][1=]{\todo[linecolor=blue,backgroundcolor=blue!25,bordercolor=blue,#1]{#2}}
\newcommandx{\info}[2][1=]{\todo[linecolor=OliveGreen,backgroundcolor=OliveGreen!25,bordercolor=OliveGreen,#1]{#2}}
\newcommandx{\improvement}[2][1=]{\todo[linecolor=Plum,backgroundcolor=Plum!25,bordercolor=Plum,#1]{#2}}
\newcommandx{\thiswillnotshow}[2][1=]{\todo[disable,#1]{#2}}
\theoremstyle{plain}
\newtheorem{proposition}{Proposition}[section]
\newtheorem{theorem}[proposition]{Theorem}
\newtheorem{corollary}[proposition]{Corollary}
\newtheorem{definition}[proposition]{Definition}
\newtheorem{lemma}[proposition]{Lemma}
\newtheorem{conjecture}[proposition]{Conjecture}
\newtheorem{problem}[proposition]{Problem}
\newenvironment{remark}{\prerk\rm}{\endprerk}
\def\mc{\mathcal}
\def\mb{\mathbf}
\DeclareMathOperator{\gn}{gn}
\DeclareMathOperator{\Win}{Win}
\DeclareMathOperator{\rk}{rk}
\DeclareMathOperator{\Ran}{Random}
\DeclareMathOperator{\F}{\mathbb{F}}
\begin{document}

\title{Guessing Games on Triangle-free Graphs}

\author{Peter J. Cameron
\thanks{School of Mathematical Sciences,
Queen Mary, University of London, London, E1 4NS, U.K. Current address:
School of Mathematics and Statistics, University of St Andrews, North Haugh, St Andrews, KY16 9SS, U.K. Email: pjc20@st-andrews.ac.uk.}
\and Anh N. Dang
\thanks{School of Electronic Engineering and Computer Science,
Queen Mary, University of London, London, E1 4NS, U.K.}
\and S{\o}ren Riis
\thanks{School of Electronic Engineering and Computer Science,
Queen Mary, University of London, London, E1 4NS, U.K. Email:
s.riis@qmul.ac.uk}
}

\date{}

\maketitle

\begin{abstract}
The guessing game introduced by Riis\cite{Riis07} is a variant of the ``guessing your own hats'' game and can be played on any simple directed graph $G$ on $n$ vertices. For each digraph $G$, it is proved that there exists a unique guessing number $\gn(G)$ associated to the guessing game played on $G$. When we consider the directed edge to be bidirected, in other words, the graph $G$ is undirected, Christofides and Markstr\"{o}m \cite{Christofides&Markstrom11} introduced a method to bound the value of the guessing number from below using the fractional clique cover number $\kappa_f(G)$. In particular they showed $\gn(G) \geq |V(G)| - \kappa_f(G)$. Moreover, it is pointed out that equality holds in this bound if the underlying undirected graph  $G$ falls into one of the following categories: perfect graphs, cycle graphs or their complement. In this paper, we show that there are triangle-free graphs that have guessing numbers which do not meet the fractional clique cover bound. In particular, the famous triangle-free Higman--Sims graph has guessing number at least $77$ and at most $78$, while the bound given by fractional clique cover is $50$.
\end{abstract}

\section{Introduction}
The motivation of developing guessing games \cite{Riis07} comes from the study of a specific class of problems in network coding \cite{AhlswedeCLY00}, namely multiple unicast network coding. A multiple unicast network is a communication network in which each sender has a unique receiver that wishes to obtain messages from it. Such a network can be represented by a directed acyclic graph where senders, receivers and routers are vertices in the graph and channels are directed edges between vertices. Furthermore, we simplify the problem by require that each channel only allows one message to pass through it at a time. If we merge each vertex representing a sender with a vertex which represent its corresponding receiver in the directed acyclic graph, we will obtain an auxiliary digraph in which we no longer have the distinction between senders, receivers, or routers. We can define a guessing game to play on this auxiliary digraph; the rules of our guessing game will allow us to translate strategies on the auxiliary graph into coding functions on the simplified multiple unicast network and \emph{vice versa}. The guessing number is a measure of the performance of the optimal strategy for a guessing game; its precise definition will be given in Section \ref{Sec:Definitions}.

It is worth noting that guessing games were the main ingredients in Riis' proof of the invalidity of two conjectures raised by Valiant~\cite{Valiant} in circuit complexity in which he asked about the optimal Boolean circuit for a Boolean function. (See \cite{Riis07}.)

Our paper deals with the guessing game played on a special type of directed graph in which each directed edge is bidirected, i.e. our graphs are undirected. In particular, we show that there are triangle-free undirected graphs where the guessing numbers of these graphs can not be computed using the fractional clique cover method developed by Christofides and Markstr\"om in \cite{Christofides&Markstrom11}. This also gives counterexamples to their conjecture about the optimal guessing strategy based on fractional clique cover in \cite{Christofides&Markstrom11}. (The first counterexample to Christofides and Markstr\"om's conjecture was illustrated in \cite{Baber13} but the graph is not triangle-free.)

Our paper is organised as as follows. Firstly, we introduce the rules of guessing games played on undirected graphs in Section \ref{Sec:Definitions}. Then in Section \ref{Sec:Asymptotic} we prove the existence of the asymptotic guessing numbers. In Section \ref{Sec:FractionalCliqueCover} the fractional clique cover strategy from \cite{Christofides&Markstrom11} is formally defined. Our main results appear in Section \ref{Sec:TriangleFreeGraphs}. Sections \ref{Sec:Definitions}, \ref{Sec:Asymptotic}, \ref{Sec:FractionalCliqueCover} already appeared in \cite{GR11}, \cite{Christofides&Markstrom11}, and \cite{Baber13} but we reproduce them here in order to make this paper self-contained.

\section{Definitions}\label{Sec:Definitions}
An \emph{undirected graph} $G = (V,E)$, or \emph{graph} for short, consists of a set $V(G)$ of \emph{vertices} and a set $E(G)$ of \emph{undirected edges}. An undirected edge $e \in E(G)$, or edge for short, is an unordered pair $(u, v)$ of vertices, which we also denote by $uv$ or $vu$, with $u$ and $v$ are elements of $V(G)$. We say vertices $u$ and $v$ are \emph{adjacent} if $uv$ is an edge. Similarly, we say two edges are adjacent if they share a common vertex. Given a graph $G$, we will denote its adjacency matrix by $A$. We also denote the identity matrix by $I$, and it will be clear from the context that $I$ and $A$ have the same order.

The guessing game is defined to be played on \emph{simple graphs}, i.e.\ graphs not containing \emph{loops} (edges of the form $uu$ for $u$ a vertex) or \emph{multiple edges} (two or more edges with the same vertices). Thus, two edges are adjacent if and only if they share exactly one common vertex.

Given a graph $G$ and a vertex $v\in V(G)$, the \emph{neighbourhood} of $v$ is $\Gamma(v) = \{u : uv \in E(G)\}$.

An important class of guessing strategies introduced in Section \ref{Sec:FractionalCliqueCover} involve \emph{cliques}, i.e.\ subgraphs of $G$ in which every pair of vertices are joined by an undirected edge.

Given two graphs $G$ and $H$, the \emph{tensor product} or \emph{categorical product} $G \times H$ of $G$ and $H$ is a graph with vertex set $V(G \times H)$ which is the Cartesian product of $V(G)$ and $V(H)$; and an edge $e = ((u, v), (u', v')) \in E(G \times H)$ if $uu'$ is an edge in $G$ and $vv'$ is an edge in $H$. A special case where $H$ is $K_t^\circ$ a complete graph of order $t$ with a loop at each vertex, i.e.\ $|V(H)|=t$ and $E(H)$ is the set of all unordered pairs $(u, v)$ with $u, v\in V(G)$, we will call $G \times H$ the \emph{$t$-uniform blowup} of $G$, and denote as $G(t)$. We can also see the $t$-uniform blowup as a graph formed by replacing each vertex $v$ in $G$ with a class of $t$ vertices $v_1,\ldots,v_t$ with $u_iv_j\in E(G(t))$ if and only if $uv \in E(G)$.

Given a graph $G=(V,E)$ of order $n$, and a positive integer $s$ greater than $1$, we play a \emph{guessing game} $(G,s)$ as follows:

A $(G,s)$ game consists of $n$ players with each player corresponding to one of the vertices of $G$. Throughout this paper, we use vertex $v \in V(G)$ to indicate the player who is assigned to vertex $v$. Each player is informed about its corresponding vertex, its neighbourhood $\Gamma(v)$, and the natural number $s$. The players can use this information to decide a strategy beforehand, but all communication are forbidden as soon as the game is started.

Once the game starts each player $v\in V(G)$ is assigned a value $a_v$ from \emph{alphabet set} $A_s = \{0, 1, \ldots, s-1\}$ uniformly and independently at random. The value $a_v$ is hidden from the player $v$. Instead, each player $v$ is provided a list containing its neighbourhood $\Gamma(v)$ and the value assigned to each of its vertices. The player is required to deduce its own assigned value using just this information. Each player must announce its guessed value only once. A game is won if every player deduces the assigned value correctly, and the game is lost otherwise. We are interested in the question about the maximal probability of winning when we play a guessing game $(G, s)$.

To illustrate the nature of this problem, let us play guessing games $(C_5, s)$ with $s = 2, 3, 4$ and $C_5$ the cycle graph of order $5$. This is example 3.2 in \cite{Christofides&Markstrom11}. Naively each player should guess randomly as none of the provided information directly relates to its assigned value, hence the winning probability of this strategy is $s^{-5}$. We name this naive strategy $\Ran$. The interesting property of guessing game is that we are almost always able to outperform the $\Ran$ strategy.

For $s = 2$, a possible strategy for the game $(C_5, 2)$ is as follows: each player $v$ guesses $0$ if all of its neighbours are assigned $1$, and guesses $1$ for the rest cases. The game is won if the assigned values to players belong to one of the following cases $\{11010, 10110, 10101, 01101, 01011\}$. Hence, the winning probability with this strategy is $\frac{5}{2^5} = \frac{5}{32}$, and indeed this is the highest possible winning probability for the guessing game $(C_5, 2)$ \cite{Christofides&Markstrom11}.

For $s = 3$, using computer search, Christofides and Markstr\"om \cite{Christofides&Markstrom11} were able to show that the best possible winning probability for $(C_5, 3)$ is $\frac{12}{3^5} = \frac{12}{243}$ with a complicated guessing strategy which is highly non-symmetric in the sense that the vertices all use a different guessing functions.

For $s=4$, it is shown in \cite{ Riis07B} that an optimal strategy involves the so-called \textit{fractional clique cover strategy} which we will introduce in Section \ref{Sec:FractionalCliqueCover}. The winning probability corresponds to this strategy is $\frac{2*4^2}{4^5} = 4^{-2.5}$.

In the examples above, guessing strategies are \emph{pure strategies}, i.e.\ each player deduces its guessing value by using a deterministic function whose inputs are the values of its neighbours. We can also play a \emph{mixed strategy} in which the players randomly select a strategy to follow from a set of pure strategies. The winning probability of a mixed strategy is the expected winning probability computed by averaging the winning probabilities of the chosen pure strategies using the probabilities that they are selected. Hence, playing with mixed strategy will gain us no advantage when computing maximal winning probabilities as the maximal winning probability with a mixed strategy cannot surpass the winning probability provided by the best pure strategies. Therefore we only be concerned with pure strategies throughout this paper.

Let $(G, s)$ be a guessing game played on the graph $G$ with alphabet set $A_s = \{0, 1, \ldots, s-1\}$ with $s$ is a positive integer greater than $1$. Each \emph{strategy for player $v$} is a function $f_v: A_s^{|\Gamma(v)|} \to A_s$ which takes the possible values of the neighbours of $v$ and maps them to the guessing value of $v$. A \emph{strategy} $\mc{F}$ for $(G, s)$ is a $|V(G)|-$tuple of deterministic functions $(f_v)_{v\in V(G)}$ where $f_v$ corresponds to a strategy for player $v$. We denote $\Win(G, s, \mc{F})$ for the event that the game $(G, s)$ is won by using strategy $\mc{F}$. We are interested in constructing a strategy $\mc{F}$ which maximises $\mb{P}[\Win(G, s, \mc{F})]$.

For each strategy $\mc{F}$, we denote $\gn(G, s, \mc{F})$ for the value $|V(G)|+\log_s\mb{P}[\Win(G,s,\mc{F})]$ which is the \emph{guessing number with respected to $\mc{F}$}. We define the guessing number $\gn(G, s)$ to be
\[
\gn(G,s) = |V(G)|+\log_s
\left(\max_{\mc{F}}\mb{P}[\Win(G,s,\mc{F})]\right).
\]

We can see that:
\[
\max_{\mc{F}}\mb{P}[\Win(G,s,\mc{F})] =
\frac{s^{\gn(G,s)}}{s^{|V(G)|}}.
\]
The guessing number $\gn(G, s)$ is a measure of how much better an optimal strategy outperforms the random strategy $\Ran$ when playing $(G, s)$.

\section{The asymptotic guessing number}\label{Sec:Asymptotic}
In our examples of guessing games $(C_5,s)_{s = 2, 3, 4}$, the guessing numbers $\gn(C_5,s)$ depend on $s$, and in this case the sequence $\{\gn(C_5,s)\}_{s = 2,3,\ldots}$ is not a monotone sequence of $s$. In general case for guessing games $(G, s)$, it is extremely difficult to determine the exact value of $\gn(G, s)$ for each value of $s$. Therefore, we rather interested in evaluating the value of $\gn(G, s)$ when $s$ tends to infinity, and we call it the \emph{asymptotic guessing number} $\gn(G)$:
\[
\gn(G) = \lim_{s \to \infty} \gn(G, s).
\]
It is proved in \cite{Christofides&Markstrom11} and \cite{GR11} that this limit exists. The following arguments are due to Christofides and Markstr\"om \cite{Christofides&Markstrom11}. Their strategy is to prove that the sequence  $\{\gn(G,s)\}_{s = \{2, 3, \ldots\}}$ for a general graph $G$ is an almost monotonically increasing sequence with respect to the size $s$ of the alphabet $A_s$, and this sequence is bounded above by the obvious bound $|V(G)|$ (in fact in their paper \cite{Christofides&Markstrom11}, the upper bound of this sequence is $|V(G)| - \alpha(G)$ where $\alpha(G)$ is the independence number of the graph $G$).

We start with the following lemma.

\begin{lemma}\label{Lem:BlowupIneq}
Let $G$ be an undirected graph, and $s, t$ integers with $s \geq 2$ and $t \geq 1$. We have
\begin{equation}\label{Eq:Blowup1}
t \gn(G, s) \leq \gn(G(t), s)
\end{equation}
\begin{equation}\label{Eq:Blowup2}
 \gn(G(t), s) = t \gn(G, s^t)
\end{equation}
\end{lemma}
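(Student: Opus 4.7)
The plan is to prove the two statements separately, each by direct construction. Throughout, the key bookkeeping is the formula $\gn(G,s,\mathcal{F}) = |V(G)| + \log_s \mathbb{P}[\Win(G,s,\mathcal{F})]$, so winning probabilities on the two sides only need to be related up to a power or a change-of-base.

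For inequality (\ref{Eq:Blowup1}), the plan is to lift a strategy $\mathcal{F} = (f_v)_{v \in V(G)}$ on $(G,s)$ to a ``layered'' strategy $\mathcal{F}'$ on $(G(t),s)$. Since the neighbourhood of $v_i$ in $G(t)$ contains $\{u_i : u \in \Gamma_G(v)\}$, I define $f'_{v_i}$ to ignore all coordinates outside layer $i$ and to apply $f_v$ to $(a_{u_i})_{u \in \Gamma_G(v)}$. With this choice, the game $(G(t),s)$ decomposes into $t$ disjoint copies of $(G,s)$, each played independently with strategy $\mathcal{F}$ on its own layer; because the random assignments are independent across layers, the event that all $t$ copies are won has probability exactly $\mathbb{P}[\Win(G,s,\mathcal{F})]^t$. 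Taking $\log_s$ and adding $|V(G(t))| = t|V(G)|$ gives $\gn(G(t),s,\mathcal{F}') = t\,\gn(G,s,\mathcal{F})$, and maximising over $\mathcal{F}$ yields (\ref{Eq:Blowup1}).

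For equality (\ref{Eq:Blowup2}), the plan is to exhibit a winning-probability--preserving bijection between strategies on $(G(t),s)$ and strategies on $(G,s^t)$. I identify $A_{s^t}$ with $A_s^t$, so that an assignment of a symbol in $A_{s^t}$ to each $v \in V(G)$ is the same data as an assignment of symbols in $A_s$ to the class $\{v_1,\ldots,v_t\}$ in $G(t)$. The neighbourhood of $v_i$ in $G(t)$ is $\{u_j : u \in \Gamma_G(v),\, j \in [t]\}$, of size $t|\Gamma_G(v)|$, so a strategy $f_{v_i} : A_s^{t|\Gamma_G(v)|} \to A_s$ is exactly one coordinate of a function $g_v : (A_s^t)^{|\Gamma_G(v)|} \to A_s^t$, i.e.\ of a strategy $g_v$ at vertex $v$ for $(G,s^t)$. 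Bundling the $t$ functions $f_{v_1},\ldots,f_{v_t}$ into $g_v$ gives a bijection $\mathcal{F} \leftrightarrow \mathcal{G}$, and the event that every $f_{v_i}$ is correct coincides (under the identification $A_{s^t}\cong A_s^t$) with the event that every $g_v$ is correct, so the two win probabilities agree. Writing $P$ for this common value,
\[
\gn(G(t),s,\mathcal{F}) = t|V(G)| + \log_s P, \qquad
\gn(G,s^t,\mathcal{G}) = |V(G)| + \tfrac{1}{t}\log_s P,
\]
whence $\gn(G(t),s,\mathcal{F}) = t\,\gn(G,s^t,\mathcal{G})$. Taking maxima gives (\ref{Eq:Blowup2}).

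I do not expect real obstacles in either part: the genuine content is (a) recognising that independence of the random assignments across layers makes the layered strategy factorise, and (b) recognising that the $t$-fold blowup of neighbourhoods in $G(t)$ matches the base-$s^t$ information visible to $v$ in $(G,s^t)$. The only point requiring care is the alphabet identification $A_{s^t}\cong A_s^t$ in (\ref{Eq:Blowup2}), and in particular checking that every strategy on $(G(t),s)$ arises this way --- which is immediate because an arbitrary map $A_s^{t|\Gamma_G(v)|} \to A_s^t$ is nothing but a $t$-tuple of maps $A_s^{t|\Gamma_G(v)|} \to A_s$.
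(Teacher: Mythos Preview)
Your proposal is correct and follows essentially the same approach as the paper: for (\ref{Eq:Blowup1}) you use the $t$ disjoint ``layer'' copies of $G$ inside $G(t)$ to play $\mathcal{F}$ independently, and for (\ref{Eq:Blowup2}) you use the identification $A_{s^t}\cong A_s^t$ together with the fact that all $t$ vertices $v_1,\ldots,v_t$ share the same neighbourhood in $G(t)$ to set up a win-probability--preserving correspondence between strategies. Your presentation of the bijection in (\ref{Eq:Blowup2})---bundling the $t$ coordinate maps $f_{v_i}$ into a single $g_v$---is slightly cleaner than the paper's two-sided injection argument, but the underlying idea is identical.
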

\begin{proof}
The graph $G(t)$ contains $t$ vertex disjoint copies of $G$. Given a strategy $\mc{F}$ played on $(G,s)$, we construct a strategy $\mc{F}(t)$ of $(G(t), s)$ by playing $\mc{F}$ on each of the $t$ disjoint copies of $G$ in $G(t)$. This gives us
\[
(\max_{\mc{F}}\mb{P}[\Win(G,s,\mc{F})])^t \leq \max_{\mc{F}}\mb{P}[\Win(G(t),s,\mc{F})]
\]
and the inequality (\ref{Eq:Blowup1}) follows immediately.

To prove (\ref{Eq:Blowup2}) holds, we show that there is an one-to-one correspondence between strategies played on $(G(t),s)$ and strategies played on $(G, s^t)$, with the property that given a strategy $\mc{F}(t)$ played on $(G(t),s)$ its corresponding strategy $\mc{F}$ played on $(G,s^t)$ give the same winning probabilities. Hence, we have
\[
\max_{\mc{F}}\mb{P}[\Win(G(t),s,\mc{F})] =
\max_{\mc{F}}\mb{P}[\Win(G,s^t,\mc{F})].
\]
and the result follows from the definition of guessing number.

We note that each member $a$ of the alphabet of size $s^t$ can be uniquely represented as a $t$-tuple $\{a_1, \ldots, a_t\}$ with $a_i$s are in base $s$.

Given a strategy $\mc{F}$ on $(G,s^t)$, we construct a corresponding strategy $\mc{F}(t)$ to be played on $(G(t),s)$ as follow:

For each player $v$ in $(G,s^t)$, its guessing function is $f_v: A_{s^t}^{|\Gamma(v)|} \to A_{s^t}$. The map $f_v$ induces a unique corresponding map $f_{[v_1, \ldots, v_t]}: (A_s^{t})^{|\Gamma(v)|^t} \to A_s^{t}$ due to the one-to-one correspondence between elements of $A_{s^t}$ and ${A_s}^t$. Therefore, if a player $v$ in $(G,s^t)$ follows a strategy $f_v$, the vertex class of $t$ players $[v_1, \ldots, v_t]$ in $(G(t),s)$ simulate playing as $v$ by agreeing to use $f_{[v_1, \ldots, v_t]}$ as guessing function for each member in the class. The output of $f_{[v_1, \ldots, v_t]}$ can be considered as the guess of each member $v_i$ about the overall value assigned to the whole class $[v_1, \ldots, v_t]$. This guessing function is well defined since each $v_i$ receives precisely the same input. Moreover, the guessing output produced by each member $v_i$ of the class will be the same. Each member can decompose the value of the guess for the vertex class into $t$ values from $A_s$ and uses this information as the individual guesses for each one of them. Hence we have converted a guessing strategy $\mc{F}$ of $(G,s^t)$
to a guessing strategy $\mc{F}(t)$ of $(G(t),s)$. We can also see that
\[
\mb{P}[\Win(G,s^t,\mc{F})] = \mb{P}[\Win(G(t),s,\mc{F}(t))].
\]

Clearly the map $\mc{F} \mapsto \mc{F}(t)$ defined above is an injection from the set of all guessing strategies can be played on $(G,s^t)$ to the set of all guessing strategies can be played on $(G(t),s)$. We use a similar argument to show that given a guessing strategy $\mc{F}(t)$ to be played on $(G(t),s)$ there is a corresponding guessing strategy $\mc{F}$ of $(G,s^t)$.

The unique correspondence between $f_v: A_{s^t}^{|\Gamma(v)|} \to A_{s^t}$ and $f_{[v_1, \ldots, v_t]}: (A_s^{t})^{|\Gamma(v)|^t} \to A_s^{t}$ described above allows each player $v$ in $(G, s^t)$ to pretend to be $t$ players in the class $[v_1, \ldots, v_t]$ in $(G(t),s)$. The strategy $\mc{F}(t)$ of $(G(t),s)$ can then be used and the guesses for each vertex class $[v_1, \ldots, v_t]$ can be reconstructed into a guess for the original player $v$ in $(G,s^t)$. This completes the proof.
\end{proof}

Using these results about the guessing number of the $t$-uniform blowup of graphs we now show that the guessing number is almost monotonically increasing with respect to the size of the alphabet.

\begin{lemma}\label{Lem:Monotonic}
Given a graph $G$, positive integer $s$, and real number $\epsilon>0$, there exists $t_0(G, s,\epsilon)>0$ such that for all integers $t\geq t_0$
\[\gn(G,t)\geq \gn(G,s)-\epsilon.\]
\end{lemma}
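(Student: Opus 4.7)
The plan is to combine the blowup identities of Lemma~\ref{Lem:BlowupIneq} with a simple subset-restriction strategy that converts good play on a small alphabet into decent play on a larger one. The starting observation is that Lemma~\ref{Lem:BlowupIneq} already delivers monotonicity along the subsequence $s,s^2,s^3,\ldots$: substituting the same integer $t$ into (\ref{Eq:Blowup1}) and (\ref{Eq:Blowup2}) gives $t\,\gn(G,s)\le \gn(G(t),s)=t\,\gn(G,s^t)$, hence $\gn(G,s)\le \gn(G,s^m)$ for every $m\ge 1$. All the work therefore lies in bridging the gap between an arbitrary integer~$t$ and the nearest power of $s$ below it.

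For the bridging step, fix $m\ge 1$ and an optimal strategy $\mc{F}$ for $(G,s^m)$. Given any $t\ge s^m$, pick a subset $B\subseteq A_t$ with $|B|=s^m$ together with a bijection $\phi:B\to A_{s^m}$. Each player, on reading the values of its neighbours, behaves as follows: if all those values lie in $B$, it translates them by $\phi$, runs $\mc{F}$, and outputs $\phi^{-1}$ of the resulting guess; otherwise it outputs a fixed fallback element of $A_t$ (so the strategy is well-defined). Let $E$ be the event that every assigned value lies in $B$, so $\mb{P}[E]=(s^m/t)^{|V(G)|}$. Conditional on $E$, the assignment $(\phi(a_v))_{v\in V(G)}$ is a uniform independent $(G,s^m)$ instance and every player takes the $\mc F$-branch, so conditional on $E$ the game is won with probability exactly $\mb{P}[\Win(G,s^m,\mc{F})]$. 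Hence the unconditional winning probability is at least $(s^m/t)^{|V(G)|}\,\mb{P}[\Win(G,s^m,\mc{F})]$, and a routine base-change of logarithms collapses this to
\[
\gn(G,t)\;\ge\;(m\log_t s)\cdot\gn(G,s^m).
\]

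Finally I will take $m=\lfloor\log_s t\rfloor$, so that $s^m\le t<s^{m+1}$ and $m\log_t s\in(m/(m+1),\,1]$. Using the monotonicity $\gn(G,s^m)\ge\gn(G,s)\ge 0$ together with the trivial upper bound $\gn(G,s^m)\le|V(G)|$, the displayed inequality becomes
\[
\gn(G,t)\;\ge\;\tfrac{m}{m+1}\,\gn(G,s^m)\;\ge\;\gn(G,s)-\tfrac{|V(G)|}{m+1}.
\]
Setting $t_0:=s^{\lceil|V(G)|/\epsilon\rceil}$ ensures $m+1\ge|V(G)|/\epsilon$ for every $t\ge t_0$, which gives $\gn(G,t)\ge\gn(G,s)-\epsilon$ as required.

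The main thing to be careful about is the reduction step: one must verify that the players can execute the prescribed strategy using only their neighbourhood information (the fallback branch is essential for well-definedness), and that the conditional distribution of the assignments given $E$ really is uniform on $B^{|V(G)|}$ so that, via $\phi$, it coincides with a uniform $(G,s^m)$ instance. Granted that, the rest is just arithmetic with logarithms, and the naive alternative of applying a $(G,s)$-strategy directly on $A_t$ via restriction would only give $\gn(G,t)\ge(\log_t s)\,\gn(G,s)\to 0$ as $t\to\infty$, which is precisely why the blowup identity has to be invoked to first lift to the much larger alphabet $s^m$.
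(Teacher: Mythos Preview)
Your argument is correct and follows essentially the same route as the paper: restrict an optimal $(G,s^m)$ strategy to a size-$s^m$ subset of $A_t$ with $m=\lfloor\log_s t\rfloor$, deduce $\gn(G,t)\ge (m/\log_s t)\,\gn(G,s^m)$, and combine with $\gn(G,s^m)\ge\gn(G,s)$ from Lemma~\ref{Lem:BlowupIneq}. Your version is slightly more explicit than the paper's---you spell out the fallback branch so the strategy is well-defined, and you extract a concrete $t_0$ via $\frac{m}{m+1}\gn(G,s^m)\ge\gn(G,s)-\frac{|V(G)|}{m+1}$ rather than just noting that $\lfloor\log_s t\rfloor/\log_s t\to 1$---but the underlying idea is identical.
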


\begin{proof}
To prove the statement holds, it is sufficient to show that
\begin{align}\label{Eq:AlmostIncLog}
\gn(G,t)\geq\frac{\lfloor \log_st\rfloor}{\log_st}\gn(G,s)
\end{align}
holds for all $t\geq s$ since the right hand side of (\ref{Eq:AlmostIncLog}) tends to $\gn(G,s)$ as $t$ increases.

Let $k=\lfloor \log_st\rfloor$. On the set of all guessing strategies of $(G,t)$, we consider only strategies $\mc{F}= (f_v)_{v \in V(G)}$ such that $f_v$ is a map $A_t^{\Gamma(v)} \to \{0, 1, \ldots, s^k - 1\}$ for every $v \in V(G)$. The maps $f_v$s are well defined as $t \geq s^k$. We have
\[
\max_{\mc{F}}\mb{P}[\Win(G,t,\mc{F})] \geq \mb{P}[a_v<s^k \mbox{ for
all } v\in V(G)]\max_{\mc{F}}\mb{P}[\Win(G,s^k,\mc{F})].
\]
Hence
\begin{align}\label{Eq:AlmostIncreasingPow}
\frac{t^{\gn(G,t)}}{t^{|V(G)|}}\geq
\left(\frac{s^k}{t}\right)^{|V(G)|}
\frac{s^{k\gn(G,s^k)}}{s^{k|V(G)|}}.
\end{align}
Rearranging (\ref{Eq:AlmostIncreasingPow}), we have
\begin{align}\label{Eq:AlmostIncreasing}
\gn(G,t)\geq\frac{k}{\log_st}\gn(G,s^k).
\end{align}
Lemma \ref{Lem:BlowupIneq} shows that $\gn(G,s^k)\geq\gn(G,s)$ which, together with (\ref{Eq:AlmostIncreasing}), completes the proof of
(\ref{Eq:AlmostIncLog}).
\end{proof}

\begin{theorem}
For any graph $G$, $\gn(G) = \lim_{s\to\infty} \gn(G,s)$ exists.
\end{theorem}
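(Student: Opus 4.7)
The plan is to use Lemma \ref{Lem:Monotonic} (the near-monotonicity of $\gn(G,s)$ in $s$) to pinch the $\liminf$ up to the $\limsup$, after first observing that the sequence is bounded, so both $\limsup$ and $\liminf$ are finite real numbers.

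First I would record the upper bound: since $\mathbb{P}[\Win(G,s,\mc{F})] \leq 1$ for every strategy, the definition of $\gn(G,s)$ gives $\gn(G,s) \leq |V(G)|$ for all $s \geq 2$. In particular, $L := \limsup_{s\to\infty} \gn(G,s)$ is a finite real number, and $\ell := \liminf_{s\to\infty} \gn(G,s)$ satisfies $\ell \leq L \leq |V(G)|$.

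Next I would show $\ell \geq L$, which together with $\ell \leq L$ forces equality and hence existence of the limit. Fix $\epsilon > 0$. By the definition of $\limsup$, I can pick an integer $s_0 \geq 2$ with $\gn(G,s_0) \geq L - \epsilon/2$. Now apply Lemma \ref{Lem:Monotonic} with this $s_0$ and tolerance $\epsilon/2$: there exists $t_0 = t_0(G,s_0,\epsilon/2)$ such that for every integer $t \geq t_0$,
\[
\gn(G,t) \;\geq\; \gn(G,s_0) - \epsilon/2 \;\geq\; L - \epsilon.
\]
Taking $\liminf$ over $t$ gives $\ell \geq L - \epsilon$. Since $\epsilon > 0$ was arbitrary, $\ell \geq L$, so $\ell = L$ and $\lim_{s\to\infty} \gn(G,s)$ exists and equals this common value.

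There is no real obstacle here once Lemma \ref{Lem:Monotonic} is in hand; the only subtlety is being careful that the $t_0$ produced by Lemma \ref{Lem:Monotonic} depends on the choice of $s_0$, which in turn depends on $\epsilon$, but this chain of dependencies is harmless because we only need an eventual lower bound on $\gn(G,t)$ for each fixed $\epsilon$. The boundedness step is essential to guarantee that $L$ is finite, so that comparisons like $\gn(G,s_0) \geq L - \epsilon/2$ make sense.
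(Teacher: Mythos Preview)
Your argument is correct and follows essentially the same route as the paper: both use the upper bound $\gn(G,s)\le |V(G)|$ together with Lemma~\ref{Lem:Monotonic} to show that the tail of the sequence eventually rises above any value close to its supremum/limsup, forcing convergence. The only cosmetic differences are that the paper identifies the limit candidate via the increasing sequence of running maxima $\max_{s\le n}\gn(G,s)$ rather than via $\limsup$, and uses a final tolerance of $2\epsilon$ where you split into $\epsilon/2+\epsilon/2$.
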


\begin{proof}
The sequence $\{\max_{s \leq n}\gn(G,s)\}_{n = \{2, 3, \ldots\}}$ is an increasing sequence, and by definition $\gn(G,s)\leq |V(G)|$ for all $s$, therefore the limit $\lim_{n \to \infty} \max_{s \leq n}\gn(G,s) =: \gn(G)$ exists. We note that $\gn(G,s)\leq \gn(G)$ for all $s$. It is sufficient to show that $\gn(G,s)$ converges to $\gn(G)$ from below.

Given $\epsilon>0$ there exists $s_0(\epsilon)$ such that $\gn(G,s_0(\epsilon))\geq \gn(G)-\epsilon$ (by the definition of $\gn(G)$). Lemma \ref{Lem:Monotonic} proves that there exists $t_0(\epsilon)$ satisfying the condition for all $t\geq t_0(\epsilon)$, $\gn(G,t)\geq \gn(G,s_0(\epsilon))-\epsilon$; this implies $\gn(G,t)\geq \gn(G) - 2\epsilon$, proving the convergence.
\end{proof}

\begin{remark}
A consequence of Lemma \ref{Lem:Monotonic} is that for any $s$ the guessing number $\gn(G,s)$ is a lower bound for $\gn(G)$.
By definition of guessing number $\gn(G,s)$, we have
\[
\gn(G,s) \geq |V(G)|+\log_s \mb{P}[\Win(G,s,\mc{F})].
\]
for any strategy $\mc{F}$ on $(G,s)$.
Therefore, any strategy on any alphabet size provides us a lower-bound for the asymptotic guessing number.
\end{remark}

\section{Lower bounds using the fractional clique cover}\label{Sec:FractionalCliqueCover}
The remark in the previous section tells us that we can provide a lower bound for the asymptotic guessing number using any guessing strategy on an alphabet of any size. Christofides and Markstr\"om \cite{Christofides&Markstrom11} used this fact to provide a simple lower-bound for guessing number $\gn(G)$ by constructing a general guessing strategy for graphs $G$ called the fractional clique cover strategy.

Given a graph $G$, we denote $K(G)$ for the set of all cliques in $G$, and denote $K(G,v)$ for the set of all cliques in $G$ containing vertex $v$. A \emph{fractional clique cover} of $G$ is a weighting $w:K(G)\to [0,1]$ such that
\[
\sum_{k\in K(G,v)} w(k) \geq 1.
\]
for all $v\in V(G)$.
We denote $\kappa_f(G)$ for the minimum value of $\sum_{k\in K(G)}w(k)$ over all choices of fractional clique covers $w$. (Note for any graph $G$, we have the identity: $\kappa_f(G) = \chi_f(G^c)$ where $\chi_f(G^c)$ is the fractional chromatic number of the graph's complement.)

We say a fractional clique cover is \emph{regular} if its weighting $w:K(G)\to [0,1]$ satisfies
\[
\sum_{k\in K(G,v)} w(k) = 1.
\]
for all $v\in V(G)$.

We will only consider regular fractional clique covers from now on, as they are more convenient for our purpose of constructing guessing game strategies. In fact, even though we only focus on this smaller class of fractional clique cover, we do not lose any information about the value of $\kappa_f(G)$ as it can be proved that
\[\kappa_f(G) = \min_{w\text{ regular}} \sum_{k\in K(G)}w(k)\]

To prove the above identity holds, we need to show that
\[\kappa_f(G) \leq \min_{w\text{ regular}} \sum_{k\in K(G)}w(k)\]
and
\[\kappa_f(G) \geq \min_{w\text{ regular}} \sum_{k\in K(G)}w(k)\]
The first inequality comes from the definition of $\kappa_f(G)$. To show the second inequality holds, we prove that given an optimal fractional clique cover $w$ we can make it into a regular fractional cover $w_r$ with a property that $\sum_{k\in K(G,v)} w(k) = \sum_{k\in K(G,v)} w_r(k)$. This is done by moving weights from larger cliques to smaller ones.

Let $k \in K(G,v)$ be a clique containing $v$. We denote $k' = k\backslash\{v\}$ be the subclique obtained by removing vertex $v$. If we reduce the weight $w(k)$ and increase the weight $w(k')$ by the same amount then the sum $\sum_{k\in K(G,v)} w(k)$ is reduced but all other sums remain constant. Hence, the result follows.

The result of Christofides and Markstr\"om states the following:

\begin{theorem}\label{Thm:LowerBoundIneq}
If $G$ is an undirected graph then
\[\gn(G)\geq |V(G)|-\kappa_f(G),\]
and for some positive integer $s \geq 2$, there is a guessing strategy $\mc{F}$ on $(G,s)$ such that
\[|V(G)|-\kappa_f(G) = |V(G)|+\log_s \mb{P}[\Win(G,s,\mc{F})].\]
\end{theorem}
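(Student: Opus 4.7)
The plan is to exhibit an explicit guessing strategy whose winning probability on $(G,s)$ is exactly $s^{-\kappa_f(G)}$ for a suitably chosen alphabet size $s$; this delivers both the inequality and the equality claim at once. Start with an optimal regular fractional clique cover $w:K(G)\to[0,1]$, which exists by the reduction described just before the theorem. Because $\kappa_f(G)$ is the value of a rational linear program, we may assume $w(k)=m_k/N$ for nonnegative integers $m_k$ and a common denominator $N$; then $\sum_{k\in K(G)}m_k = N\kappa_f(G)$ and, by regularity, $\sum_{k\in K(G,v)}m_k = N$ for every vertex $v$. Pick any integer $q\geq 2$ and set $s=q^N$, identifying $A_s$ with $A_q^N$ so that each assigned value $a_v$ decomposes as an $N$-tuple of digits in $A_q$.

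The strategy $\mc{F}$ is built via a \emph{per-vertex} coordinate indexing. For each $v\in V(G)$, fix a bijection between the $N$ coordinates of $a_v$ and the pairs $(k,i)$ with $k\in K(G,v)$ and $i\in\{1,\ldots,m_k\}$; this is possible precisely because $\sum_{k\in K(G,v)}m_k=N$. Write $a_v(k,i)$ for the digit of $a_v$ indexed by $(k,i)$. For each clique $k$ containing $v$ and each $i\in\{1,\ldots,m_k\}$, player $v$ is instructed to guess
\[
\hat a_v(k,i) \;=\; -\sum_{u\in k\setminus\{v\}} a_u(k,i) \pmod{q}.
\]
This is well defined because every $u\in k\setminus\{v\}$ is adjacent to $v$ in $G$ (as $k$ is a clique containing both), so $v$ sees $a_u$ and can read off the digit $a_u(k,i)$.

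Player $v$ recovers $a_v(k,i)$ correctly iff the linear constraint
\[
C_{k,i}:\quad \sum_{u\in k} a_u(k,i) \;=\; 0 \pmod{q}
\]
holds, so the game is won iff all $C_{k,i}$, ranging over $k\in K(G)$ and $i\in\{1,\ldots,m_k\}$, hold simultaneously. By the per-vertex indexing, the random digit $a_u(k,i)$ appears in exactly one constraint $C_{k,i}$; hence distinct constraints involve disjoint sets of independent uniform digits and are mutually independent, each holding with probability $1/q$. Since there are $\sum_k m_k = N\kappa_f(G)$ constraints,
\[
\mb{P}[\Win(G,s,\mc{F})] \;=\; q^{-N\kappa_f(G)} \;=\; s^{-\kappa_f(G)},
\]
which, by the definition of $\gn(G,s)$ and the remark that $\gn(G)\geq\gn(G,s)$, yields both conclusions.

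The only real subtlety is the per-vertex nature of the coordinate indexing. A naive attempt to fix a single partition of $\{1,\ldots,N\}$ into blocks $S_k$ of size $m_k$ shared across all vertices would require $\{S_k:k\in K(G,v)\}$ to partition $\{1,\ldots,N\}$ for every $v$, a condition that typically fails (e.g.\ on $C_5$ it would force an edge $2$-colouring of an odd cycle). Letting each $v$ maintain its own bijection between $\{1,\ldots,N\}$ and $\bigsqcup_{k\in K(G,v)}\{1,\ldots,m_k\}$ sidesteps this obstacle and simultaneously makes the independence of the constraints transparent.
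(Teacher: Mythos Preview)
The paper does not itself prove this theorem; it attributes the result to Christofides and Markstr\"om \cite{Christofides&Markstrom11} and states it without proof. Your argument is correct and is essentially the standard one: an optimal regular rational clique cover with common denominator $N$ yields, for $s=q^N$, an explicit strategy whose winning probability is $s^{-\kappa_f(G)}$. The key step---your per-vertex bijection between the $N$ base-$q$ digits of $a_v$ and the pairs $(k,i)$ with $k\in K(G,v)$, $1\le i\le m_k$---is exactly what makes the clique constraints $C_{k,i}$ depend on pairwise disjoint collections of independent uniform digits, and your count $\sum_k m_k = N\kappa_f(G)$ is correct.

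It is worth noting that your construction is the direct, unwound version of the blowup argument the paper sets up in Section~\ref{Sec:Asymptotic}: the $N$ digits of each $a_v$ play the role of the $N$ copies of $v$ in $G(N)$, and your clique constraints $C_{k,i}$ correspond to an integral clique cover of $G(N)$ by $N\kappa_f(G)$ cliques; Lemma~\ref{Lem:BlowupIneq}\eqref{Eq:Blowup2} then converts the resulting strategy on $(G(N),q)$ into one on $(G,q^N)$. Your final paragraph correctly identifies why a single global partition of $\{1,\dots,N\}$ need not work and why the per-vertex indexing does, which is precisely the content of the passage from fractional to integral cover in the blowup.
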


\begin{remark}
In \cite{Christofides&Markstrom11} it was proved that the above lower bound is actually an equality for various families of undirected graphs including perfect graphs, odd cycles and complements of odd cycles. This led to a conjecture that the inequality is actually an equality, but this conjecture was proved to be false in \cite{Baber13A}. The counter-example is a graph on $10$ vertices which contains many cliques of size $3$.
\end{remark}

\section{Triangle-free graphs with large guessing number}\label{Sec:TriangleFreeGraphs}
In view of the last remark, a natural question is: if we forbid the appearance of triangles in an undirected graph, then is the fractional clique cover the best guessing strategy for our undirected graphs? In other words, 
\begin{conjecture} \label{Conj:LowerBoundSharpTriagFree}
If $G$ is an undirected triangle-free graph then
\[\gn(G) = |V(G)|-\kappa_f(G).\]
\end{conjecture}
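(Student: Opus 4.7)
The lower bound $\gn(G) \geq |V(G)| - \kappa_f(G)$ is already supplied by Theorem~\ref{Thm:LowerBoundIneq}, so to prove the conjecture it suffices to establish the reverse inequality whenever $G$ is triangle-free. My natural plan is to argue information-theoretically. Fix any strategy $\mc{F}$ on $(G,s)$, let $W \subseteq A_s^{V(G)}$ be its set of winning assignments, and let $Y = (Y_v)_{v\in V(G)}$ be uniformly distributed on $W$. Each player's guess coincides with the true value on $W$, so $Y_v$ is a deterministic function of $Y_{\Gamma(v)}$, i.e.\ $H(Y_v \mid Y_{\Gamma(v)}) = 0$ for every $v$. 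Since $\gn(G,s,\mc{F}) = \log_s|W|$ is just $H(Y)$ in $\log_s$ units, the required inequality becomes $H(Y) \leq (|V(G)| - \kappa_f(G))\log s$.

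The approach is then to combine the zero conditional-entropy equations with Shannon submodularity, weighted by an optimal regular fractional clique cover. Since $G$ is triangle-free, every clique $k \in K(G)$ is either a single vertex or a single edge, so the whole LP has a very clean structure. The key technical ingredient would be an ``edge-collapse'' inequality $H(Y_u, Y_v) \leq \log s$ for every edge $uv$, expressing that the joint information at the endpoints of an edge carries no more than one alphabet symbol. Given such inequalities, weighting them by an optimal regular cover $w$ and using the regularity constraint $\sum_{k \ni v} w(k) = 1$ should yield
\[
H(Y) \;\leq\; \sum_{v\in V(G)} H(Y_v) \;-\; \bigl(|V(G)|-\kappa_f(G)\bigr)\log s,
\]
which together with $H(Y_v) \leq \log s$ closes the argument. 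Turning this sketch into a proof amounts to writing the Shannon inequalities in the correctly weighted form and invoking LP duality between the regular fractional clique cover and the fractional independent set LP, which on a triangle-free graph is simply a fractional matching/vertex-cover duality.

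I expect the main obstacle to be precisely the edge-collapse step, and in fact I suspect it cannot be derived from triangle-freeness alone. The identity $H(Y_u \mid Y_{\Gamma(u)}) = 0$ only determines $Y_u$ once the \emph{whole} neighbourhood is fixed, not merely $Y_v$, so in a triangle-free graph there is no a priori reason why the pair $(Y_u, Y_v)$ should collapse to one symbol of entropy. On highly symmetric triangle-free graphs the players can couple the coordinates of $W$ through global algebraic structure (for instance linear codes over $\F_s$ exploiting eigenvalue symmetries of the adjacency matrix) in ways that respect every Shannon inequality yet push $H(Y)$ well beyond $(|V(G)|-\kappa_f(G))\log s$. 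I therefore anticipate the conjecture failing on strongly structured triangle-free graphs such as the Higman--Sims graph, and the proper reaction becomes to search for a counterexample rather than to force through a proof.
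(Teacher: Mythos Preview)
Your instincts are excellent, and in fact they agree with the paper: the conjecture is \emph{false}, and the Higman--Sims graph is indeed one of the counterexamples used. Your heuristic analysis of why the information-theoretic approach stalls---the ``edge-collapse'' inequality $H(Y_u,Y_v)\le\log s$ is not implied by $H(Y_v\mid Y_{\Gamma(v)})=0$ plus triangle-freeness---is exactly the right diagnosis.

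That said, your proposal does not actually \emph{settle} the conjecture; it only argues persuasively that a proof attempt should fail and that one should instead look for a counterexample. The paper supplies the missing constructive step. The tool is linear: for any matrix $M$ over $\F_q$ that represents $G$ (nonzero diagonal, off-diagonal entries supported on edges), the linear strategy ``assume $MX=0$'' wins on the kernel of $M$, giving
\[
\gn(G)\ \ge\ |V(G)|-\rk_{\F_q}(M).
\]
Since for triangle-free $G$ one has $\kappa_f(G)\ge |V(G)|/2$, any such $M$ with $\rk_{\F_q}(M)<|V(G)|/2$ refutes the conjecture. For the Higman--Sims graph the paper takes $M=A+I$ over $\F_3$ and shows $\rk_{\F_3}(A+I)=23$, so $\gn(G)\ge 77>50\ge|V(G)|-\kappa_f(G)$. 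The same device works for the Clebsch, Hoffman--Singleton, Gewirtz, and $M_{22}$ graphs with suitable $A+kI$ and fields.

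So the gap in your write-up is not a wrong idea but a missing one: to turn your (correct) suspicion into a disproof you must exhibit a strategy whose guessing number exceeds $|V(G)|-\kappa_f(G)$, and the efficient way to do that for strongly regular triangle-free graphs is the low-rank linear strategy above, exploiting that $(A-\theta_1 I)(A-\theta_2 I)=\mu J$ forces small $p$-ranks of $A+kI$ for well-chosen $k$ and $p$.
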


A useful bound on $\kappa_f(G)$ which we will make use of is given by the following lemma.
\begin{lemma}\label{Lem:BoundOnFractionalClique}
For any undirected graph $G$
\[
\kappa_f(G)\geq \frac{|V(G)|}{\omega(G)},
\]
where $\omega(G)$ is the number of vertices in a maximum clique in
$G$.
\end{lemma}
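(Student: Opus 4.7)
The plan is to bound $\kappa_f(G)$ from below by summing the fractional clique cover constraint over all vertices and then using the clique-size bound $\omega(G)$ to relate the resulting double sum back to the total weight $\sum_{k \in K(G)} w(k)$. This is essentially an averaging (or LP-duality-in-disguise) argument.

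Concretely, I would first fix an arbitrary fractional clique cover $w : K(G) \to [0,1]$. By definition, for each vertex $v$,
\[
\sum_{k \in K(G,v)} w(k) \;\geq\; 1.
\]
Summing this inequality over all $v \in V(G)$ yields
\[
\sum_{v \in V(G)} \sum_{k \in K(G,v)} w(k) \;\geq\; |V(G)|.
\]
The key step is then to interchange the order of summation on the left-hand side: each clique $k \in K(G)$ appears in $K(G,v)$ precisely when $v \in k$, so the left-hand side equals $\sum_{k \in K(G)} |k|\, w(k)$.

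Next, since every clique $k$ of $G$ has at most $\omega(G)$ vertices, one has $|k|\, w(k) \leq \omega(G)\, w(k)$ term-by-term, giving
\[
\omega(G) \sum_{k \in K(G)} w(k) \;\geq\; \sum_{k \in K(G)} |k|\, w(k) \;\geq\; |V(G)|,
\]
i.e.\ $\sum_{k \in K(G)} w(k) \geq |V(G)|/\omega(G)$. Taking the infimum over fractional clique covers $w$ produces the bound on $\kappa_f(G)$.

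There is no real obstacle here: the argument is elementary double counting, and both inequalities used (the defining inequality for a fractional clique cover, and $|k| \leq \omega(G)$) are immediate. The only thing to be mindful of is that one must argue for arbitrary fractional covers (not just regular ones), but since the definition of $\kappa_f(G)$ is an infimum over all fractional clique covers, this is automatic. One could alternatively phrase the proof via LP duality, noting that the fractional independence number $\alpha_f(G) \geq |V(G)|/\omega(G)$ (obtained by the uniform weight $1/\omega(G)$ on each vertex) and invoking $\kappa_f(G) = \alpha_f(G)$ for the complement; but the direct double-counting proof above is shorter and self-contained.
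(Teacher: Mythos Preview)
Your proof is correct and follows essentially the same double-counting argument as the paper: sum the cover constraint over all vertices, swap the order of summation, and bound each $|k|$ by $\omega(G)$. The only cosmetic difference is that the paper works with an optimal \emph{regular} cover (so the per-vertex constraint is an equality), whereas you work with an arbitrary cover and take the infimum at the end; both routes are equally valid.
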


\begin{proof}
Let $w$ be an optimal regular fractional clique cover. Since
$\sum_{k\in K(G,v)} w(k) = 1$ holds for all $v\in V(G)$, summing
both sides over $v$ gives us,
\[
\sum_{k\in K(G)} w(k)|V(k)| = |V(G)|,
\]
where $|V(k)|$ is the number of vertices in clique $k$. The result
trivially follows from observing
\[
\sum_{k\in K(G)} w(k)|V(k)| \leq \sum_{k\in K(G)} w(k)\omega(G) =
\kappa_f(G) \omega(G). \qedhere
\]
\end{proof}

\begin{corollary}\label{Cor:KappaTriangleFree}
For triangle-free graph $G$, the $\kappa_f(G) \geq |V(G)|/2$.
\end{corollary}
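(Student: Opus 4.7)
The plan is to derive this corollary as an immediate consequence of Lemma \ref{Lem:BoundOnFractionalClique}. That lemma gives $\kappa_f(G) \geq |V(G)|/\omega(G)$ for any undirected graph $G$, so it suffices to bound $\omega(G)$ from above when $G$ is triangle-free.

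First I would observe that a clique on three or more vertices contains a triangle as a subgraph (any three vertices of a clique form a triangle, since they are pairwise adjacent). Consequently, if $G$ is triangle-free, then $G$ contains no clique of size $3$ or greater, which means $\omega(G) \leq 2$. Substituting this bound into the inequality from Lemma \ref{Lem:BoundOnFractionalClique} yields $\kappa_f(G) \geq |V(G)|/\omega(G) \geq |V(G)|/2$, which is exactly the desired conclusion.

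A minor point worth mentioning is the degenerate case in which $G$ has no edges at all, so $\omega(G) = 1$; here the bound gives the stronger inequality $\kappa_f(G) \geq |V(G)|$, which still implies $\kappa_f(G) \geq |V(G)|/2$. There is no real obstacle in this proof: the only substantive content is the elementary observation about $\omega(G)$, and the rest is a direct appeal to the previous lemma.
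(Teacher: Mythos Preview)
Your proof is correct and follows exactly the approach the paper intends: the corollary is stated immediately after Lemma~\ref{Lem:BoundOnFractionalClique} with no separate proof, since it is an immediate consequence of that lemma together with the observation that triangle-free graphs have $\omega(G)\le 2$. Your write-up simply makes this implicit step explicit.
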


We will show in this section that there are triangle-free graphs for which the asymptotic guessing $\gn(G)$ of $G$ is strictly greater than $|V(G)|/2$. Combining this with Corollary \ref{Cor:KappaTriangleFree}, we will prove that the answer to the Conjecture \ref{Conj:LowerBoundSharpTriagFree} is negative. Before illustrating our results, we need to introduce the following definition:
\begin{definition} \label{Def:MatrixRepresentsGraph}
Given graph $G = (V, E)$ of order $n$, we say a square matrix $M$ of order $n$ with entries selected from a finite field $\F_q$ of $q$ elements with rows and columns indexed by vertices $i \in V(G)$ represents $G$ over $\F_q$ if the diagonal entries of $M$ are non-zero and the non-diagonal entries $m_{ij}$ are $0$ whenever $ij \not\in E(G)$.
\end{definition}

Let $M$ be a representing matrix of $G$ over $\F_q$. We can form a guessing strategy for $(G, q)$ by asking each player $i$ to adapt an assumption that the assigned values of itself and every player in its neighbourhood are taken from $\F_q$ and satisfy a linear equation 
\[m_{ii} x_i + \sum_{j \in \Gamma(i)} m_{ij} x_j = 0\]
where $x_i$, $x_j$s are assigned values of players $i$, $j$s, and coefficients $m_{ii}$ and $m_{ij}$ are the $(i,i)$-th and $(i,j)$-th entries of $M$. Then the value of $x_i$ produced by this strategy is
\[x_i=m_{ii}^{-1}\sum_{j \in \Gamma(i)} m_{ij} x_j,\]
which is well-defined since $m_i\ne0$ by assumption.

The guessing game $(G, q)$ is won by adopting strategy $M$ if the assigned values $X = \begin{pmatrix} x_1 & x_2 & \cdots & x_n \end{pmatrix}^\top$ give a solution of a system of linear equations $MX = 0$ defined over $\F_q$.
\begin{equation} \label{Equ:GuessingNumberofMatrix}
\gn(G, q, M) = \log_q |\{X \in \F_q^n| MX = 0\}| = n - \rk_{\F_q}(M)
\end{equation}
We note that $MX = 0$ always has a trivial solution $X = \mb{0}$ hence $\log_q |\{X \in \F_q^n| MX = 0\}|$ is well defined.

The value $n - \rk_{\F_q}(M)$ is a valid lower-bound of $\gn(G)$, i.e.
\[\gn(G) \geq \gn(G, q, M) = n - \rk_{\F_q}(M).\]
It is clear that we can disprove Conjecture \ref{Conj:LowerBoundSharpTriagFree} by constructing a triangle-free graph that has a representation matrix $M$ with $\rk_{\F_q}(M) < |V(G)|/2$ over some finite field $\F_q$.

\begin{definition}\label{Def:SteinerSystem}
A \emph{Steiner system} $S(t, k, n)$ is a family of $k$-element subsets of $\{1, 2, \ldots, n\} =: [n]$ with the property that each $t$-element subset of $[n]$ contained in exactly one element of $S(t, k, n)$. Elements of $S(t, k, n)$ are called blocks, and elements of $[n]$ are referred to as points.
\end{definition}

For more information about Steiner systems, and the particular system used here, we refer to \cite[Chapter 1]{CavL}.

The following proposition plays a crucial role in our construction:

\begin{proposition}
The Steiner system $S(3, 6, 22)$ has the following properties:
\begin{itemize}[font=\normalfont]
\item[(a)] $S(3, 6, 22)$ contains $77$ blocks.
\item[(b)] Any two blocks in $S(3, 6, 22)$ intersect in zero or two points.
\item[(c)] No three blocks in $S(3, 6, 22)$ are disjoint.
\item[(d)] Each point is contained in exactly $21$ blocks.
\end{itemize}
\end{proposition}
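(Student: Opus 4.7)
The plan is to derive all four properties from the standard formula for the number of blocks of an $S(t,k,n)$ that contain a fixed $i$-subset (for $i\le t$):
\[
\lambda_i \;=\; \binom{n-i}{t-i}\bigg/\binom{k-i}{t-i},
\]
which specializes for $S(3,6,22)$ to $\lambda_0 = 77$, $\lambda_1 = 21$, and $\lambda_2 = 5$. I would first justify the formula by the standard double counting of pairs $(T,B)$, where $T$ is a $t$-subset containing the given $i$-subset and $B$ is the unique block through $T$. Parts (a) and (d) then read off directly as $\lambda_0$ and $\lambda_1$.

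For part (b), the inequality $|B_1 \cap B_2| \le 2$ is immediate, since any $3$-subset of the intersection would lie in two distinct blocks, contradicting the defining property. To rule out $|B_1 \cap B_2| = 1$, I would fix a point $p \in B_1$ and count blocks through $p$ that hit $B_1$ in some second point. Each of the five pairs $\{p,q\}$ with $q \in B_1 \setminus \{p\}$ lies in $\lambda_2 = 5$ blocks, one of which is $B_1$ itself, contributing $4$ additional blocks per pair; these $5 \cdot 4 = 20$ blocks are pairwise distinct because no block different from $B_1$ can meet $B_1$ in more than two points. The total number of blocks through $p$ other than $B_1$ is $\lambda_1 - 1 = 20$, so every such block is already accounted for and meets $B_1$ in exactly two points, precluding an intersection of size one.

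For part (c) I would argue by contradiction. Suppose $B_1, B_2, B_3$ are pairwise disjoint; they cover $18$ points, leaving a four-set $\{a,b,c,d\}$. For any block $B$ through $a$, the five points of $B \setminus \{a\}$ split among $B_1, B_2, B_3$ and $\{b,c,d\}$, and by part (b) each of the three quantities $|B \cap B_i|$ lies in $\{0,2\}$. The identity
\[
|B \cap B_1|+|B \cap B_2|+|B \cap B_3|+|B \cap \{b,c,d\}| = 5
\]
then forces $|B \cap \{b,c,d\}|$ to be odd, hence at least $1$. Summing over the $\lambda_1 = 21$ blocks through $a$ gives $\sum_{B \ni a} |B \cap \{b,c,d\}| \ge 21$, but the same sum, counted by the other coordinate, equals $\sum_{e \in \{b,c,d\}} \lambda_2 = 3 \cdot 5 = 15$. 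The contradiction $21 \le 15$ rules out three pairwise disjoint blocks.

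The main obstacle, or at least the step demanding the most care, is the tight counting in (b): one has to notice that $5 \cdot 4$ and $\lambda_1 - 1$ agree exactly, leaving no room for any block through $p$ to meet $B_1$ in a single point. Once (b) is established, the argument for (c) is a short parity-plus-double-counting exercise that uses (b) in an essential way.
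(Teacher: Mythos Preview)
Your proof is correct, but it follows a noticeably different path from the paper's. For (a) and (d) both arguments amount to the same counting, just packaged differently (the paper walks the chain $2$ points $\to$ $1$ point $\to$ $0$ points by hand rather than quoting the $\lambda_i$ formula). The real divergence is in (b) and (c). For (b) the paper fixes a point $p$ and argues that the $21$ blocks through $p$, restricted to the other $21$ points, satisfy the axioms of a projective plane of order $4$; the conclusion that any two blocks through $p$ share a second point then drops out as the ``two lines meet'' axiom. For (c) the paper fixes a block $B$, counts that exactly $16$ blocks are disjoint from it, and observes that these $16$ blocks on the $16$ points outside $B$ form a symmetric $(16,6,2)$-design, whence any two of them intersect in $2$ points. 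Your route is more elementary and self-contained: in (b) you bypass the projective-plane identification with a tight double count ($5\cdot 4 = \lambda_1 - 1$), and in (c) your parity-plus-double-count argument avoids invoking Fisher-type facts about symmetric designs. The paper's approach has the virtue of exposing rich substructures (the $\mathrm{PG}(2,4)$ and the biplane of order $4$) that are interesting in their own right and useful later; yours has the virtue of needing nothing beyond the $\lambda_i$ and part~(b) itself.
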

\begin{proof}
(a) We simply count the number of blocks containing a fixed set of points.
Given two points $i, j$ in $[n]$, there are $20$ choices of the third point $k \in [n] \backslash \{i,j\}$ to form a group of $3$ points. By definition, any $3$ points of $[n]$ belongs to exactly one block, hence there are exactly $20$ blocks containing both two fixed points $i$ and $j$. 

Let $B$ and $C$ be two blocks containing both $i$ and $j$. We have $B \cap C = \{i, j\}$ and there are $4$ points in $B$ other than $i$ and $j$, so there are $20/4 = 5$ blocks that contain both $i$ and $j$.

Now we fix one point $i$ in $[n]$. There are $21$ pairs of $[n]$ containing $i$ and if $x$ is a block that contains $i$ then it also contains $5$ pairs of $[n]$ containing $i$, hence each point $i$ of $[n]$ belongs to $21\cdot5/5 = 21$ blocks.

We repeat our argument for zero point of $[n]$ and we derive that there are $22\cdot21/6 = 77$ blocks of $S(3, 6, 22)$.

(b) We see in the first part that each point in $S(3, 6, 22)$ belongs to $21$ different blocks. If we fix a point $p$, then there are $21$ points $q \neq p$, and each of these points belongs to $5$ blocks that containing $p$. The system of $21$ blocks on $21$ points satisfies the following properties:
\begin{itemize}
\item[(i)] For every two distinct points $q$, $l \neq p$, there is exactly one block that contains both points (by definition of $S(3, 6, 22)$).
\item[(ii)] Let $B$ be a block in the set of $21$ blocks containing $p$. For each point $q \neq p$ in $B$, there are exactly $5$ blocks contains $q$ including $B$ (from (i)). Moreover, it is clear that for any two distinct points $q$, $l$ which are different from $p$, the set of blocks containing $q$ and the set of blocks containing $l$ share $B$ as their unique common element. Since $B$ is arbitrarily, this shows that for any two blocks $B$ and $C$ in the set of $21$ blocks containing a fixed point $p$, $B$ and $C$ intersect at exactly one point beside $p$.
\item[(iii)] Let $B$ be a block that contains the fixed point $p$. We choose other $3$ points $q$, $k$, $l$ in $B$ and a point $h$ that does not belong to $B$. The set of four points $\{q, k, l, h\}$ obviously cannot be contained in one single block of the $21$ blocks having $p$ as their element.
\end{itemize}
Hence these $21$ blocks on $21$ points form a projective plane where each block is a line in this plane. A corollary is that any two blocks that contain a fixed point $p$ must also contain another point $q$. This proves that any two blocks in $S(3, 6, 22)$ either intersect in zero or two points.

(c) We fix a block $B \in S(3, 6, 22)$. There are $16$ points that are not in $B$. Moreover, for every two distinct pairs of points of $B$, the set of blocks containing one pair and the set of blocks containing the other pair share $B$ as their unique common element. Therefore, there are exactly $60$ blocks having non-empty intersection with $B$. This leaves $16$ blocks that have empty intersection with $B$. From (b) we know that any two blocks must intersect in zero or two points, this makes the $16$ points and $16$ blocks a symmetric balanced incomplete block design (BIBD) $(16, 6, 2)$. It follows from the property of symmetric BIBD that any two blocks intersect in $2$ points.

(d) This is already proved in part (a).
\end{proof}

\begin{theorem} \label{Thm:TriagFree}
There exists an undirected triangle-free graph $G$ on $100$ vertices with $\gn(G) \geq 77$.
\end{theorem}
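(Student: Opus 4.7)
The plan is to construct $G$ as the Higman--Sims graph (a strongly regular graph with parameters $(100,22,0,6)$) built from the Steiner system $S(3,6,22)$, and then exhibit an integer matrix representing $G$ whose rank over any odd prime field is at most $23$. By \eqref{Equ:GuessingNumberofMatrix} this gives $\gn(G)\geq 100-23 = 77$ at once.

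For the construction, I take $V(G)=\{\ast\}\cup P\cup B$, where $P$ is the $22$-point set underlying the Steiner system and $B$ is its set of $77$ blocks. The vertex $\ast$ is joined to every point; a point $p$ is joined to a block $B_i$ iff $p\in B_i$; and two blocks are joined iff they are disjoint. Triangle-freeness then reduces to four cases, all immediately ruled out: points are pairwise non-adjacent; $\ast$ is not adjacent to any block; a putative triangle $\{p,B_i,B_j\}$ would force a point to lie in two disjoint blocks; and a triangle of three pairwise disjoint blocks is forbidden by property (c) of the preceding proposition.

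Next, a routine double-count using properties (a)--(d) --- essentially repeating the arguments of the preceding proposition for each of the four types of non-adjacent pairs --- shows that $G$ is $22$-regular, triangle-free, and that any two non-adjacent vertices share exactly $6$ common neighbours, so $G$ is strongly regular with parameters $(100,22,0,6)$. Consequently its adjacency matrix $A$ satisfies $A^2+6A-16I=6J$, and combining this with $A\mathbf{1}=22\mathbf{1}$ yields $(A-22I)(A-2I)(A+8I)=0$. Because $A$ is real symmetric, it is diagonalisable over $\mathbb{R}$ with eigenvalues drawn from $\{22,2,-8\}$; plugging the traces $\operatorname{tr} A=0$ and $\operatorname{tr} A^2 = 2200$ into the resulting linear system forces the multiplicities to be $1$, $77$ and $22$ respectively, so in particular $\rk_{\mathbb{Q}}(A-2I)=23$.

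Finally, set $M=A-2I\in\mathbb{Z}^{100\times100}$ and view it over $\F_q$ for any odd prime $q$. The diagonal entries equal $-2\neq 0$ in $\F_q$ and the off-diagonal support is precisely $E(G)$, so $M$ represents $G$ in the sense of Definition \ref{Def:MatrixRepresentsGraph}. Every $24\times 24$ minor of $M$ vanishes as an integer (since the $\mathbb{Q}$-rank is $23$) and therefore vanishes modulo $q$, whence $\rk_{\F_q}(M)\leq 23$; \eqref{Equ:GuessingNumberofMatrix} together with the remark following Lemma \ref{Lem:Monotonic} then gives $\gn(G)\geq\gn(G,q,M)=100-\rk_{\F_q}(M)\geq 77$. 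The main step I expect to be tedious rather than genuinely hard is verifying the strong-regularity parameters, in particular counting the common neighbours of two intersecting blocks: this requires combining property (b) with the symmetric $(16,6,2)$-design structure on the $16$ blocks disjoint from a fixed block that is already used in the proof of property (c).
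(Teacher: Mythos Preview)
Your proposal is correct and yields the same bound $\gn(G)\geq 77$, but the route differs from the paper's in one substantive way. The paper does not invoke strong regularity at all; it constructs the same graph, checks triangle-freeness from the Steiner-system properties, and then computes directly that $\rk_{\F_3}(A+I)=23$ by exhibiting an explicit basis of the row space (the rows $r_x$ indexed by the $22$ neighbours of $\infty$ together with the all-ones vector $j$) and showing every remaining row lies in their span. You instead verify the full parameter set $(100,22,0,6)$, read off the rational spectrum, deduce $\rk_{\mathbb{Q}}(A-2I)=23$, and then push the rank inequality to $\F_q$ via vanishing of $24\times24$ integer minors. Over $\F_3$ your matrix $A-2I$ coincides with the paper's $A+I$, so the two arguments are computing the same object by different means.

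Each approach has a cost. The paper's explicit row-space computation is self-contained and avoids any eigenvalue machinery or common-neighbour counts, but is bespoke to $\F_3$. Your spectral argument is cleaner and works uniformly over every odd prime, but it front-loads the work into verifying $\mu=6$ for all four types of non-adjacent pairs; as you anticipate, the case of two intersecting blocks is the one requiring the $(16,6,2)$-design structure already established in part (c) of the preceding proposition, and this is indeed the only genuinely nontrivial case. Either way the proof goes through.
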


\begin{proof}
We define the vertex set of the graph $G$ to be $22$ points plus $77$ blocks of the Steiner system $S(3,6,22)$ plus an extra point $\{\infty\}$. We define an edge between two vertices $u$ and $v$ if one of the following conditions is satisfied:
\begin{itemize}
\item $u$ is $\{\infty\}$, and $v$ is a point.
\item $u$ is a point and $v$ is a block which contains $u$ as an element.
\item $u$ and $v$ are blocks of $S(3,6,22)$ and $u \cap v = \emptyset$.
\end{itemize}

According to the previous proposition, the graph obtained form our construction is triangle-free. It remains to show that there is a matrix representing $G$ with rank less than $50$ over some finite field $\F_q$.

The chosen matrix is $A+I$ where $A$ is the adjacency matrix of $G$ and $I$ is the identity matrix of order $100$. The rank of the matrix $A+I$ is $23$ over the finite field $\F_3$ (see the next lemma).

In this graph, the size of the maximal independent set is $22$ (and the independent sets of size $22$ are the vertex neighbourhoods), hence the guessing number of this graph is at most $78$.
\end{proof}

The constructed graph is in fact the Higman--Sims graph \cite{HiSi}, which is a strongly regular triangle-free graph with parameters $(100,22,6)$. The Higman-Sims graph was first introduced by Dale Mesner in his 1956 PhD thesis \cite{Mesner}; see \cite{KlWo} for a historical account.

\begin{proposition}
If $A$ is the adjacency matrix of the Higman--Sims graph, then the rank of
$A+I$ over $\F_3$ is $23$.
\end{proposition}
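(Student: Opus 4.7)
The plan is to compute the $\mathbb{F}_3$-spectrum of $A$ directly from the fact that Higman--Sims is strongly regular with parameters $(100,22,0,6)$. The key observation is that reducing the standard strongly-regular identity modulo $3$ collapses $A$ to an involution, so diagonalisability over $\mathbb{F}_3$ is automatic; then the characteristic polynomial over $\mathbb{Q}$ (which is known from the real spectrum) reduces mod $3$ to reveal the multiplicity of the eigenvalue $-1$, and this multiplicity is exactly $100-\rk_{\mathbb{F}_3}(A+I)$.

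Concretely, I would first recall that for a strongly regular graph with parameters $(n,k,\lambda,\mu)$ one has
\[A^2 = kI + \lambda A + \mu(J - I - A),\]
which for $(100,22,0,6)$ becomes $A^2 = 16 I - 6A + 6J$ over $\mathbb{Z}$. Reducing modulo $3$ gives $A^2 \equiv I \pmod 3$, so the minimal polynomial of $A$ over $\mathbb{F}_3$ divides $(x-1)(x+1)$ and $A$ is diagonalisable over $\mathbb{F}_3$ with all eigenvalues in $\{\pm 1\}$.

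Next I would determine the multiplicities. The real spectrum of $A$ (either quoted as a known fact about Higman--Sims, or re-derived from the strongly-regular parameters via the quadratic $x^2 - (\lambda-\mu)x - (k-\mu) = 0$, which gives roots $2$ and $-8$, with multiplicities pinned down by $\operatorname{tr}(A)=0$ and total dimension $100$) is $22^{(1)}, 2^{(77)}, (-8)^{(22)}$. Since $A$ is an integer matrix, $\chi_A(x)\in\mathbb{Z}[x]$ reduces mod $3$ to the characteristic polynomial of $A$ over $\mathbb{F}_3$. Using $22\equiv 1$, $2\equiv -1$, $-8\equiv 1\pmod 3$, this reduction equals $(x-1)^{23}(x+1)^{77}$. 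Combined with diagonalisability, the $(-1)$-eigenspace has dimension $77$, so
\[\rk_{\mathbb{F}_3}(A+I) = 100 - \dim_{\mathbb{F}_3}\ker(A+I) = 100 - 77 = 23.\]

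There is no serious obstacle here once one accepts the real spectrum of the Higman--Sims graph; the two ingredients (the strongly-regular quadratic identity and the rational characteristic polynomial) fit together essentially for free, because the congruences $2\equiv -1$ and $-8\equiv 1\pmod 3$ happen to glue the $\pm\sqrt{100}$ eigenspaces to the $\pm 1$-eigenspaces of an involution over $\mathbb{F}_3$. The only verification worth spelling out carefully is that the strongly-regular identity really does reduce to $A^2=I$ mod $3$ and that the minimal polynomial therefore splits with distinct roots, so no Jordan-block correction to the multiplicities is needed.
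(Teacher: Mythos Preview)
Your argument is correct. The reduction of the strongly-regular identity $A^{2}=16I-6A+6J$ to $A^{2}\equiv I\pmod 3$ is right, so $A$ is indeed diagonalisable over $\F_{3}$ with eigenvalues in $\{1,-1\}$; and the reduction of the integral characteristic polynomial $(x-22)(x-2)^{77}(x+8)^{22}$ to $(x-1)^{23}(x+1)^{77}$ then forces $\dim\ker(A+I)=77$, hence $\rk_{\F_{3}}(A+I)=23$. The only point to be slightly careful about is the implicit use of the fact that for an integer matrix the characteristic polynomial commutes with reduction mod $p$; this is immediate since $\det(xI-A)\in\mathbb{Z}[x]$.

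This is, however, a genuinely different route from the paper's proof. The paper argues combinatorially: it writes the vertex set as $\{\infty\}\cup X\cup Y$ with $X=\Gamma(\infty)$, observes that the $22$ rows $r_{x}$ of $B=A+I$ indexed by $x\in X$ restrict to the identity on the $X$-block (by triangle-freeness), and checks by hand that adjoining the all-ones vector $j$ yields $23$ independent vectors in the row space which moreover span it (the spanning argument uses the strongly-regular parameters to count, for each $y\in Y$, the contributions of the $16$ rows $r_{x}$ with $x\not\sim y$ modulo $3$). Your spectral argument is shorter and essentially the Brouwer--Van Eijl method the paper cites in a footnote for the Hoffman--Singleton case; what the paper's argument buys in exchange for its length is an explicit basis of the row space, described purely in terms of the $S(3,6,22)$ structure, together with the explicit coordinates of every remaining row in that basis. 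The paper does note at the end, almost as an afterthought, that $B^{2}=2B$ over $\F_{3}$ and hence $B$ is diagonalisable --- which is equivalent to your $A^{2}\equiv I$ observation --- but does not use this to compute the rank.
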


\begin{proof}
Let $r_v$ be the row of $B = A + I$ corresponding to vertex $v$. We write the
vertex set as $\{\infty\}\cup X\cup Y$, where $X$ and $Y$ are the neighbours
and non-neighbours of $\infty$. Consider the 22 vectors $r_x$ for
$x\in X$. Since the graph is triangle-free, the restriction of $r_x$ to
the coordinates in $X$ has a one in position $x$ and zeros elsewhere; so
these $22$ vectors are linearly independent. Take the $23$rd vector to be
the all-1 vector $j$. Note that $j$ is not in the span of the first $22$.
For if it were, it would have to be their sum (looking at the restriction
to $X$. But the sum of the $r_x$ has coordinate $22 \equiv 1\mod 3$ at $\infty$,
$1$ at each point of $X$, and $6 \equiv 0\mod 3$ at each point of $Y$; that is, it is
$r_\infty$. So our $23$ vectors are linearly independent. Also, they are all
contained in the row space of $B$. (This is clear for the $r_x$; also the
sum of all the vectors $r_v$ is $2j$, since all column sums of $B$ are $23 \equiv 2\mod 3$,
so $j$ is also in the row space.

We claim that they span the row space. It is clear that their span contains
all $r_x$ for $x\in X$, and we just showed that it contains $r_\infty$.
Take a vertex $y\in Y$. Consider the sum of the vectors $r_x$ for the $16$
vertices $x\in X$ which are not joined to $y$. This has coordinate $16 \equiv 1\mod 3$
at $\infty$, $0$ at points of $X$ joined to $y$, and $1$ at points of $X$
not joined to $y$. The coordinate at $y$ is zero. If $y'$ is joined to $y$,
then the six neighbours of $y'$ in $X$ are a subset of the 16 points not
joined to $y$, so the coefficient at $y'$ is $6 \equiv 0\mod 3$. If $y'$ is not joined
to $y$, then $y'$ is joined to two neighbours of $y$ in $X$ and to four
non-neighbours, so the coefficient at $y'$ is $4 \equiv 1\mod 3$. Thus the sum of our
sixteen vectors is $j-r_y$, showing that $r_y$ lies in the span of our
$23$ chosen vectors.

Notice incidentally that $B^2 = 2B$, so that the minimum polynomial of $B$
is the product of distinct linear factors, so $B$ is diagonalisable.
\end{proof}

We also found other strongly regular triangle-free graphs which have guessing number larger than the lower bound given by fractional clique cover. See \cite[Chapter 8]{CavL} for further details about these graphs. %The arguments for the ranks of various matrices are similar to those for the Higman--Sims graph above -- the eigenvalues of all these graphs are tabulated on page~109 of \cite{CavL}.

\begin{proposition}
The following triangle-free graphs on $n$ vertices have their guessing number larger than $n/2$:
\begin{itemize}[font=\normalfont]\itemsep0pt
\item[(a)] The Clebsch graph on $16$ vertices has $10 \leq \gn(G) \leq 11$.
\item[(b)] The Hoffman--Singleton graph on $50$ vertices has $29 \leq \gn(G) \leq 35$.
\item[(c)] The Gewirtz graph on $56$ vertices has $36 \leq \gn(G) \leq 40$.
\item[(d)] The $M_{22}$ graph on $77$ vertices has $55 \leq \gn(G)\leq 56$.
\item[(e)] The Higman-Sims graph on $100$ vertices has $77 \leq \gn(G) \leq 78$.
\end{itemize}
\end{proposition}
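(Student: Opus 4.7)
My plan is to treat each of the five graphs separately; in each case the upper and lower bound come from different arguments. For the upper bound I would simply invoke the general inequality $\gn(G)\leq|V(G)|-\alpha(G)$ of Christofides and Markstr\"om~\cite{Christofides&Markstrom11} quoted in Section~\ref{Sec:Asymptotic}, and substitute the known independence numbers $\alpha=5,15,16,21,22$ of the Clebsch, Hoffman--Singleton, Gewirtz, $M_{22}$ and Higman--Sims graphs. These values are standard: for Hoffman--Singleton, Gewirtz and $M_{22}$ they coincide with the Hoffman ratio bound $-|V(G)|\,\theta_{\min}/(k-\theta_{\min})$, and for Clebsch and Higman--Sims the extremal independent sets are easily enumerated (the latter was already done inside the proof of Theorem~\ref{Thm:TriagFree}).

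For the lower bounds my plan is, in each case, to mimic the strategy just used for Higman--Sims: exhibit a matrix $M$ representing $G$ over some finite field $\F_{p}$ whose rank is at most $|V(G)|-\gn(G)$, and then apply Equation~(\ref{Equ:GuessingNumberofMatrix}). The natural candidate is $M=A+cI$ with $c\not\equiv 0\pmod p$, so that $M$ automatically has nonzero diagonal. The choice of $(p,c)$ is guided by the three eigenvalues $k,r,s$ of $A$ (where for a triangle-free SRG $(n,k,0,\mu)$ the eigenvalues $r,s$ are the roots of $x^{2}+\mu x-(k-\mu)=0$ with multiplicities $f,g$ determined by $k+fr+gs=0$ and $1+f+g=n$): one picks $p$ so that as many of $k,r,s$ as possible coincide modulo $p$, and takes $-c$ to be the common residue, so that the algebraic multiplicity of $0$ as an eigenvalue of $M$ over $\F_{p}$ is as large as possible.

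When the three reductions $k,r,s\pmod p$ turn out to be distinct, the minimal polynomial of $A$ splits into distinct linear factors over $\F_{p}$, $A$ is diagonalisable, and $\rk(A+cI)$ equals $|V(G)|$ minus the sum of the algebraic multiplicities of the eigenvalues congruent to $-c\pmod p$. This handles two of the three remaining cases at once: for the Clebsch graph the eigenvalues $5,1,-3$ reduce to $2,1,0\pmod 3$, so $M=A-I$ over $\F_{3}$ has rank $16-10=6$ and gives $\gn(G)\geq 10$; for the $M_{22}$ graph the eigenvalues $16,2,-6$ reduce to $1,2,0\pmod 3$, so $M=A-2I$ over $\F_{3}$ has rank $77-55=22$ and gives $\gn(G)\geq 55$.

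The main obstacle lies in the Hoffman--Singleton and Gewirtz cases. For Hoffman--Singleton all three eigenvalues $7,2,-3$ coincide modulo $5$, and the matrix $M=A-2I$ over $\F_{5}$ satisfies $M^{2}=J$ and $M^{3}=0$; for Gewirtz, $M=A+I$ over $\F_{3}$ satisfies $M^{2}=2J$ and has minimal polynomial $x^{2}(x-2)$. In both settings $M$ is non-diagonalisable, and the SRG identity by itself pins down only the rank of $M^{2}$ (namely $1$), not the full Jordan structure: the number of size-$2$ Jordan blocks is free, and with it the rank of $M$. To close this gap I would run a direct row-space or null-space argument in the style of the proof given for Higman--Sims, using the local combinatorial structure of each graph (the vertex-plus-neighbourhood-plus-non-neighbourhood partition together with the known descriptions of these graphs via pentagons and pentagrams, respectively via the blocks of $S(3,6,22)$ missing a fixed point) to identify either a spanning set of rows or an explicit kernel of $M$ over $\F_{p}$. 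Executing this step carefully, and verifying that the resulting rank is exactly $21$ (respectively $20$), is the delicate and technical core of the proof.
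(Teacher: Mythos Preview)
Your plan is correct and matches the paper's overall strategy: upper bounds via $\gn(G)\le |V(G)|-\alpha(G)$ with the stated independence numbers, and lower bounds via $\gn(G)\ge |V(G)|-\rk_{\F_p}(A+cI)$ for suitable $p$ and $c$. Your matrices for Hoffman--Singleton, Gewirtz and $M_{22}$ are in fact identical to the paper's (since $A-2I\equiv A+3I$ over $\F_5$ and $A-2I\equiv A+I$ over $\F_3$), and for the Higman--Sims case both of you simply cite Theorem~\ref{Thm:TriagFree}.

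The differences are in execution, and they mostly favour you. For the Clebsch graph the paper works over $\F_2$ with $A+I$ and writes down an explicit basis of the row space; you instead pass to $\F_3$, where the three eigenvalues $5,1,-3$ remain distinct, so diagonalisability immediately gives $\rk(A-I)=16-10=6$ with no case analysis. Likewise for $M_{22}$ you read the rank $22$ off from the distinct residues $1,2,0$ of $16,2,-6$ modulo $3$, whereas the paper defers to a computer-generated basis file. These eigenvalue shortcuts are clean and the paper does not exploit them.

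For Hoffman--Singleton and Gewirtz you correctly identify the obstruction (all, respectively two, eigenvalues collapse modulo $p$ and the matrix is non-diagonalisable) and propose to resolve the rank by a direct combinatorial argument in the spirit of the Higman--Sims computation. The paper does not carry this out either: it simply records the ranks $21$ and $20$ and points to an external file \texttt{Basis.txt} (citing Brouwer--Van~Eijl for the Hoffman--Singleton value). So your plan is at least as complete as the paper's on these two cases, and more ambitious in intending a human-readable argument; just be aware that the paper offers no template here beyond the Higman--Sims proof, so that ``delicate and technical core'' is genuinely left for you to supply.
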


\begin{proof}

The upper bound for the guessing number derived for these graphs is $\gn(G) \leq |V(G)| - \alpha(G)$ where $\alpha(G)$ is the independence number \cite{Riis07}. The independence numbers of the Clebsch graph, the Hoffman--Singleton graph, and the Gewirtz graph are $5$, $15$, and $16$, respectively \cite{Brouwer}. 
The independence number of the $M_{22}$ graph is $21$, as shown below.

(a) The Clebsch graph is a triangle-free strongly regular graph with parameters $(16,5,2)$ which is constructed as follows:
We start with a finite set $S = \{1, 2, 3, 4, 5\}$. The set $V$ contains all subsets of size $1$, and $2$ of $S$, and $V$ also contains an extra single point set $\{*\}$. We form the $(16,5,2)$ graph with vertex set $V$ and an edge between two vertices $u$ and $v$ if one of the following conditions is satisfied:
\begin{itemize}
\item $u$ is $\{\infty\}$, and $v$ is a subset of $S$ with cardinal $1$.
\item $u$ is a subset of $S$ with cardinal $1$, $v$ is a subset of $S$ with cardinal $2$, and $u$ is a subset of $v$.
\item $u$ and $v$ are subsets of $S$ with cardinal $2$, and $u$ intersect $v$ is empty.
\end{itemize}
We have the rank of the matrix $A+I$ is $6$ over finite field $\F_2$. A basis of $A+I$ is $\{j\} \cap \{r_v| v \in \Gamma(\infty)\}$, where $r_v$ is the row of $A+I$ corresponding to vertex $v$ and $j$ is the all--1 vector.

(b) The Hoffman-Singleton graph which is triangle-free strongly regular with parameters $(50, 7, 1)$ has one way of construction as follows:
We take five 5-cycles $C_h$ and their complements $C^c_i$, and we join vertex $j$ of $C_h$ to vertex $hi+j \mod 5$ of $C^c_i$. This construction is due to Conway. The rank of the matrix $A+3I$ over finite field $\F_5$ is $21$\footnote{Brouwer and Van Eijl derived the same result for $A+3I$ over $\F_5$ \cite[pages 340, 341]{Brouwer&VanEijl} using eigenvalue method.}. A basis for this matrix over $\F_5$ is recorded in the file \url{Basis.txt} which can be downloaded from \url{https://www.eecs.qmul.ac.uk/~smriis/}. This file also includes a description for coordinates of each row in $A+3I$ over $\F_5$ with respect to the given basis.

(c) The Gewirtz graph with parameters $(56, 10, 2)$ can be constructed from the $S(3,6,22)$ by fixing an element and let the vertices be the $56$ blocks not containing that element. Two vertices are adjacent if the intersection of their corresponding blocks is empty. The rank of the matrix $A+I$ over finite field $\F_3$ is $20$. A basis for this matrix over $\F_3$ is recorded in the file \url{Basis.txt}.

(d) The triangle-free strongly regular graph $M_{22}$ with parameters $(77, 16, 4)$ which can be constructed by let the $77$ blocks of $S(3,6,22)$ be the vertices of the graph, and an edge $uv$ between two vertices $u$ and $v$ if $u$ and $v$ are disjoint as blocks. Note that this is the induced subgraph of the Higman--Sims graph on the set of non-neighbours of $\infty$.

To see that its independence number is $21$, note that the vertices other than $\infty$ non-adjacent to a vertex in $X$ in the Higman--Sims graph form an independent set of size~$21$; and there is no larger independent set, since all independent sets of size $22$ in the Higman--Sims graph are vertex neighbourhoods.

The rank of $A+I$ over finite field $\F_3$ is $22$. A basis for this matrix over $\F_3$ is recorded in the file \url{Basis.txt}.
%\improvement[inline]{I need to add basis of $A+I$}

(e) Theorem \ref{Thm:TriagFree}.
\end{proof}

\begin{remark}
For a more extensive list of computations of ranks of matrices $A + kI$ over $\F_q$ for $q = 2, 3, 5, 7$ see \url{EBasis.zip} at \url{https://www.eecs.qmul.ac.uk/~smriis/}.
\end{remark}

\begin{problem}
Find the exact value of the guessing number of the strongly regular graphs considered here.
\end{problem}

\end{document}